\documentclass{article}
\usepackage{amsmath,amsthm,amssymb}
\newcommand{\nc}{\newcommand}

\newtheorem{thm}{Theorem}[section]
\newtheorem{rmk}[thm]{Remark}

\newtheorem{lemma}[thm]{Lemma}

\newtheorem{corollary}[thm]{Corollary}

\newtheorem{definition}[thm]{Definition}

\newenvironment{cor}{\begin{corollary} \rm}{\end{corollary}}

\nc{\Ext}{\operatorname{Ext}}
\nc{\FS}{\operatorname{FS}}
\nc{\NS}{\operatorname{NS}}
\nc{\Amp}{\operatorname{Amp}}
\nc{\Pic}{\operatorname{Pic}}
\nc{\Kom}{\operatorname{Kom}}
\nc{\DGrB}{\operatorname{DGrB}}
\nc{\antidiag}{\operatorname{antidiag}}
\nc{\diag}{\operatorname{diag}}
\nc{\mo}{\operatorname{mod}}
\nc{\Gr}{\operatorname{Gr}}
\nc{\Rep}{\operatorname{Rep}}
\nc{\Perf}{\operatorname{Perf}}
\nc{\Hom}{\operatorname{Hom}}
\nc{\Sym}{\operatorname{Sym}}
\nc{\RHom}{R\operatorname{Hom}}
\nc{\cRHom}{\operatorname{\mathcal{R}\mathcal{H}om}}
\nc{\cHom}{\operatorname{\mathcal{H}om}}
\nc{\End}{\operatorname{End}}
\nc{\Coh}{\operatorname{Coh}}
\nc{\Aut}{\operatorname{Aut}}
\nc{\Td}{\operatorname{Td}}
\nc{\Coker}{\operatornamoe{Coker}}
\nc{\coker}{\operatorname{coker}}
\nc{\colim}{\operatorname{colim}}
\nc{\Ker}{\operatorname{Ker}}
\nc{\img}{\operatorname{Im}}
\nc{\D}{\operatorname{D}}
\nc{\ch}{\operatorname{ch}}
\nc{\Stab}{\operatorname{Stab}}
\nc{\SL}{\operatorname{SL}}
\nc{\rk}{\operatorname{rk}}
\nc{\GL}{\operatorname{GL}}
\nc{\Log}{\mathop{\mathrm{Log}}}
\nc{\abs}[1]{\lvert#1\rvert}
\nc{\Cone}{\operatorname{Cone}}
\nc{\id}{\operatorname{id}}
\nc{\Li}{\operatorname{Li}}
\nc{\fmod}{\operatorname{fmod}}
\nc{\SdR}{\operatorname{SdR}}

\nc{\cA}{{\mathcal A}}
\nc{\cB}{{\mathcal B}}
\nc{\cC}{{\mathcal C}}
\nc{\cD}{{\mathcal D}}
\nc{\cE}{{\mathcal E}}
\nc{\cF}{{\mathcal F}}
\nc{\cG}{{\mathcal G}}
\nc{\cH}{{\mathcal H}}
\nc{\cI}{{\mathcal I}}
\nc{\cJ}{{\mathcal J}}
\nc{\cK}{{\mathcal K}}
\nc{\cL}{{\mathcal L}}
\nc{\cM}{{\mathcal M}}
\nc{\cN}{{\mathcal N}}
\nc{\cO}{{\mathcal O}}
\nc{\cP}{{\mathcal P}}
\nc{\cQ}{{\mathcal Q}}
\nc{\cR}{{\mathcal R}}
\nc{\cS}{{\mathcal S}}
\nc{\cT}{{\mathcal T}}
\nc{\cU}{{\mathcal U}}
\nc{\cV}{{\mathcal V}}
\nc{\cW}{{\mathcal W}}
\nc{\cX}{{\mathcal X}}
\nc{\cY}{{\mathcal Y}}
\nc{\cZ}{{\mathcal Z}}
 
\nc{\bA}{{\mathbb A}}
\nc{\bB}{{\mathbb B}}
\nc{\bC}{{\mathbb C}}
\nc{\bD}{{\mathbb D}}
\nc{\bE}{{\mathbb E}}
\nc{\bF}{{\mathbb F}}
\nc{\bG}{{\mathbb G}}
\nc{\bH}{{\mathbb H}}
\nc{\bI}{{\mathbb I}}
\nc{\bJ}{{\mathbb J}}
\nc{\bK}{{\mathbb K}}
\nc{\bL}{{\mathbb L}}
\nc{\bM}{{\mathbb M}}
\nc{\bN}{{\mathbb N}}
\nc{\bO}{{\mathbb O}}
\nc{\bP}{{\mathbb P}}
\nc{\bQ}{{\mathbb Q}}
\nc{\bR}{{\mathbb R}}
\nc{\bS}{{\mathbb S}}
\nc{\bT}{{\mathbb T}}
\nc{\bU}{{\mathbb U}}
\nc{\bV}{{\mathbb V}}
\nc{\bW}{{\mathbb W}}
\nc{\bX}{{\mathbb X}}
\nc{\bY}{{\mathbb Y}}
\nc{\bZ}{{\mathbb Z}}
    
\begin{document}
\title{On Euler characteristics for large Kronecker quivers}
\author{So Okada\footnote{Supported by JSPS Grant-in-Aid and Global
    Center of Excellence Program at Kyoto University, Email:
    okada@kurims.kyoto-u.ac.jp, Address: Masukawa Memorial Hall 502,
    Research Institute for Mathematical Sciences, Kyoto University,
    606-8502, Japan.}}
\maketitle
 
\begin{abstract}
  We study Euler characteristics of moduli spaces of stable
  representations of $m$-Kronecker quivers for $m>>0$.
\end{abstract}

\section{Introduction}
For each positive integer $m$, let $K^{m}$ be the $m$-Kronecker quiver
which consists of two vertices and $m$ arrows from one to the
other. For generic non-trivial stability conditions \cite{Bri} on the
category of representations of $K^{m}$ and moduli spaces of stable
representations $M(K^{m}(a,b))$ of coprime dimension vectors $(a,b)$
\cite{Kin}, we study Euler characteristics $\chi(K^{m}(a,b))$.

We put some more details in the later section and we go on as follows.
Notice that for the Euler form $\langle \cdot, \cdot \rangle$ and a
symplectic form $\{ \cdot, \cdot \}$, which is an anti-symmetrization
of the Euler form, we may take a non-trivial stability condition on
the category of representations of $K^{m}$ such that for
representations $E,F$ of $K^{m}$ and the slope function $\mu$, we have
$\mu(E)>\mu(F)$ if and only if $\{E,F\}>0$.
 
For objects to study in terms of wall-crossings, stability conditions
such that the positivity of the difference of slopes coincides with
that of symplectic forms on the Grothendieck group have been commonly
called Denef's stability conditions in physics \cite{Den00}.  We
employ these special stability conditions and the terminology.

Euler characteristics $\chi(K^{m}(a,b))$ have been studied
extensively.  In particular, formulas of Kontsevich-Soibelman and
Reineke \cite{KonSoi,Rei03,Rei10} have been known.  In this article,
we would like to study quantitative questions for $m>>0$.

To analyze further, for each coprime $a,b$ and $m>0$, let us define
the bipartite quiver $Q^{m}(a,b)$ which consist of $a$ source vertices
and $b$ terminal vertices with $m$ arrows from each source vertex to
each terminal vertex.  On representations of $Q^{m}(a,b)$, we have
Denef's stability conditions (see Section \ref{sec:proofs}).

We denote $M(Q^{m}(a,b))$ to be the moduli space of stable
representations of dimension vectors being one on each vertex of
$Q^{m}(a,b)$ and $\chi(Q^{m}(a,b))$ to be the corresponding Euler
characteristic.

In this paper, we first prove the following.
\begin{thm}\label{thm:eq} 
  For each coprime $a,b$, and $m>>0$, we have
  $$\chi(Q^{1}(a,b))\sim 
  \frac{a!b!}{m^{a+b-1}}
  \chi(K^{m}(a,b)).$$
\end{thm}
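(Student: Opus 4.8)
The idea is to relate both sides via explicit wall-crossing / localization combinatorics, extracting the leading power of $m$ on the right-hand side and matching it to the (finite) count on the left. First I would fix coprime $(a,b)$ and recall that $\chi(K^m(a,b))$ is a polynomial in $m$: by the Kontsevich–Soibelman / Reineke formulas the generating function for the $\chi(K^m(a,b))$ is governed by a product of quantum dilogarithm–type factors whose coefficients are polynomial in $m$, and the degree in $m$ of $\chi(K^m(a,b))$ can be read off from the dimension of the moduli space, which for the $m$-Kronecker quiver with dimension vector $(a,b)$ is $1 - \langle(a,b),(a,b)\rangle = mab - a^2 - b^2 + 1$. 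One checks that the leading term is captured by the "maximally split" contributions in the wall-crossing expansion, i.e. the terms where the dimension vector $(a,b)$ is broken all the way down to the simple constituents; these are precisely the contributions that survive when we pass to the bipartite quiver $Q^m(a,b)$.

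Next I would set up the bridge to $Q^m(a,b)$. A representation of $Q^m(a,b)$ with dimension vector $\mathbf 1$ on every vertex is the same data as a point of $(\bP^{m-1})^{ab}$ (one nonzero $m$-vector, up to scale, per source–terminal pair) subject to the stability condition from Section~\ref{sec:proofs}, with the torus $(\bG_m)^{a+b}/\bG_m$ acting. By Denef's choice of stability the GIT quotient $M(Q^m(a,b))$ has a torus action with isolated fixed points, so $\chi(Q^m(a,b))$ counts stable torus-fixed configurations, which is a purely combinatorial count of spanning-tree–like subgraphs of $K_{a,b}$ with multiplicities; crucially this count is \emph{independent of $m$} once $m$ is large, which is why $\chi(Q^1(a,b))$ appears on the left. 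The factor $a!\,b!$ enters because passing from the bipartite quiver to the Kronecker quiver one quotients by the permutations of the $a$ sources and the $b$ terminals (the Kronecker moduli space is, roughly, a symmetric analogue of the bipartite one), and the power $m^{a+b-1}$ is the dimension of the fiber of the map $M(Q^m(a,b)) \to M(K^m(a,b))$ that forgets the decomposition — equivalently, it is the difference of the two moduli dimensions, $(mab-a^2-b^2+1) - (mab - a - b + 1) \cdot(\text{correction})$, which I would pin down by a direct Euler-form computation.

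Concretely, the steps in order: (i) write the wall-crossing / KS–Reineke expansion for $\chi(K^m(a,b))$ and identify, by degree count in $m$, the leading coefficient as a sum over complete decompositions into simples; (ii) show this leading coefficient equals $\tfrac{1}{a!\,b!}\,\chi(Q^m(a,b))$ for $m \gg 0$ by exhibiting the $S_a\times S_b$-cover $M(Q^m(a,b)) \dashrightarrow M(K^m(a,b))$ and checking it is generically finite of degree $a!\,b!$ with the right fiber dimension $m^{a+b-1}$ accounting for the normalization; (iii) invoke torus localization on $M(Q^m(a,b))$ to see $\chi(Q^m(a,b))$ is eventually constant in $m$, hence equal to $\chi(Q^1(a,b))$; (iv) assemble (i)–(iii) into the asymptotic $\chi(Q^1(a,b)) \sim \frac{a!b!}{m^{a+b-1}} \chi(K^m(a,b))$.

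The main obstacle I anticipate is step (ii): making precise the sense in which $M(K^m(a,b))$ is covered by $M(Q^m(a,b))$ and controlling exactly which strata of the wall-crossing formula contribute to the top-degree term in $m$ — in particular ruling out that partially-split decompositions also contribute at leading order, and verifying that the genericity of Denef's stability condition makes the relevant fixed-point loci transverse so that the Euler characteristic genuinely multiplies by $m^{a+b-1}$ rather than something smaller. Steps (iii) and (iv) are then routine localization and bookkeeping.
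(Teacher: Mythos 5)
Your overall instinct --- that the theorem is the leading order in $m$ of a wall-crossing expansion, with the $\frac{1}{a!b!}$ coming from symmetrizing over identical constituents --- matches the spirit of the paper, which reads the statement off as the first-order approximation of the Manschot--Pioline--Sen formula. But two of your steps fail as stated, and they are exactly where the content lies. First, your step (iii) is false: $\chi(Q^{m}(a,b))$ is not eventually constant in $m$. Already for $(a,b)=(1,1)$ one has $M(Q^{m}(1,1))=\bP^{m-1}$, so $\chi(Q^{m}(1,1))=m$. In general the paper's Lemma \ref{lem:linear} establishes the exact scaling $\chi(Q^{m}(\overline{a},\overline{b}))=m^{S_{\overline{a}}+S_{\overline{b}}-1}\chi(Q^{1}(\overline{a},\overline{b}))$, proved by running Reineke's Harder--Narasimhan recursion and observing that passing from $Q^{1}$ to $Q^{m}$ multiplies the relevant exponents by $m$, so the Poincar\'e polynomial of $Q^{m}$ is obtained from that of $Q^{1}$ by the substitution $q\mapsto q^{m}$ inside a factor of the form $(q-1)^{1-S_{\overline{a}}-S_{\overline{b}}}(\cdots)$. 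This scaling, not constancy, is the source of the factor $m^{a+b-1}$. Your attribution of $m^{a+b-1}$ to the ``fiber dimension'' of a generically finite cover $M(Q^{m}(a,b))\dashrightarrow M(K^{m}(a,b))$ of degree $a!b!$ is not coherent: a generically finite map has zero-dimensional fibers, $m^{a+b-1}$ is a number rather than a dimension, and the dimension gap between the two moduli spaces is $a^{2}+b^{2}-a-b$, independent of $m$. No such cover is constructed in the paper or needed.

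Second, your step (i) is a heuristic, not a proof: to know that the maximally split stratum dominates, and to pin down its coefficient, you need an exact identity rather than a degree count. The paper's input here is the MPS formula (Theorem \ref{thm:mps}, specialized in Corollary \ref{cor:mps}), which writes $\chi(K^{m}(a,b))$ as a finite sum over pairs of partitions $(\overline{a},\overline{b})$ of terms $\chi(Q^{m}(\overline{a},\overline{b}))$ with explicit $m$-independent coefficients $\Pi_{l}\frac{1}{\overline{a}_{l}!}\frac{(-1)^{\overline{a}_{l}(l-1)}}{l^{2\overline{a}_{l}}}\cdot\Pi_{l}\frac{1}{\overline{b}_{l}!}\frac{(-1)^{\overline{b}_{l}(l-1)}}{l^{2\overline{b}_{l}}}$. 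Combining this with the scaling lemma, the finest partition ($a_{1}=a$, $b_{1}=b$) contributes $\frac{1}{a!b!}m^{a+b-1}\chi(Q^{1}(a,b))$, while every other term is $O(m^{a+b-2})$ because $S_{\overline{a}}+S_{\overline{b}}<a+b$; that is the entire proof. Without an identity of this kind you cannot rule out contributions of other strata at leading order, and the geometric substitute you propose in step (ii) does not survive scrutiny.
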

In terms of physics, we would like to mention that in Theorem
\ref{thm:eq}, Euler characteristics in the left-hand and right-hand
sides are discussed to be blackhole counting in supergravity
\cite{ManPioSen} and Witten index in superstring theory \cite{Den02}.
In \cite{Oka10}, with the framework of Kontsevich's homological mirror
symmetry \cite{Kon}, the $m$-Kronecker quiver $K^{m}$ has been
described in terms of Lagrangian intersection theory.

Key tools to obtain Theorem \ref{thm:eq} are the recently obtained
formula Theorem \ref{thm:mps} on $\chi(K^{m}(a,b))$ by
Manschot-Pioline-Sen \cite{ManPioSen} (MPS formula for short) and our
Lemma \ref{lem:linear}.  We realize that by taking $m$ to be a
variable, MPS formula provides the polynomial expansion of
$\chi(K^{m}(a,b))$ whose coefficients involve Euler characteristics of
bipartite quivers such as $Q^{m}(a,b)$. Indeed, we are dealing with
nothing but the first-order approximation of $\chi(K^{m}(a,b))$ for
$m>>0$.

By Theorem \ref{thm:eq}, to compute $\chi(Q^{1}(a,b))$, we can take
the advantage of $\chi(K^{m}(a,b))$.  Since the explicit formula of
$\chi(K^{m}(a,a+1))$ has been provided in \cite{Wei09}, we can obtain
$\chi(Q^{1}(a,a+1))$ by taking $m\to \infty$ in Corollary
\ref{cor:d+1}. Let us mention that for the cases of $a=1$ and
arbitrary $b$, we see that Stirling formula explains Theorem
\ref{thm:eq}.

When $a+b=1$, $M(K^{m}(a,b))$ is a point.  Taking logarithms in
Theorem \ref{thm:eq}, we have the following.
\begin{cor}\label{cor:douglas} 
  For $m>>0$, we have
  $$\ln(\chi(K^{m}(a,b)))\sim (a+b-1)\ln (m).$$
  In particular, for $a,b>>0$ such that $\frac{b}{a} \sim r$ and
  large enough $m$ depending on $a,b$, we have
  $$\frac{\ln(\chi(K^{m}(a,b)))}{a}\sim (1+r)\ln (m).$$
\end{cor}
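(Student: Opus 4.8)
The plan is to read off Corollary~\ref{cor:douglas} from Theorem~\ref{thm:eq} by taking logarithms; the only genuine issues are that the $m$-independent constant $\chi(Q^{1}(a,b))$ must be a positive real number so that $\ln\chi(Q^{1}(a,b))$ is a finite quantity, and that in the second assertion the limit $m\to\infty$ and the limit $(a,b)\to\infty$ must be taken in the right order.

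\textbf{Step 1 (the first asymptotic).} I would rewrite Theorem~\ref{thm:eq} as
$$\chi(K^{m}(a,b))\ \sim\ \frac{m^{a+b-1}}{a!\,b!}\,\chi(Q^{1}(a,b))\qquad(m\to\infty),$$
and first note that $\chi(Q^{1}(a,b))$ is a positive integer independent of $m$: since every entry of the dimension vector of $Q^{1}(a,b)$ equals $1$, the moduli space $M(Q^{1}(a,b))$ is a smooth projective toric variety (a GIT quotient of the affine space $\bA^{ab}$ of arrows by the torus $(\bC^{*})^{a+b-1}$), so its Euler characteristic is its number of torus fixed points and hence $\geq 1$; in any case Theorem~\ref{thm:eq}, being an asymptotic equivalence, already forces $\chi(Q^{1}(a,b))\neq 0$, while $\chi(K^{m}(a,b))>0$ pins down the sign. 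Taking $\ln$ of the displayed relation then gives, for fixed coprime $(a,b)$ with $a+b\geq 2$,
$$\ln\chi(K^{m}(a,b))\ =\ (a+b-1)\ln m\ -\ \ln(a!\,b!)\ +\ \ln\chi(Q^{1}(a,b))\ +\ o(1),$$
and since the leading term tends to $\infty$ the additive constants are negligible, so $\ln\chi(K^{m}(a,b))/\bigl((a+b-1)\ln m\bigr)\to 1$, i.e. $\ln\chi(K^{m}(a,b))\sim(a+b-1)\ln m$. (For $a+b=1$ the space $M(K^{m}(a,b))$ is a point, so both sides are identically $0$.)

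\textbf{Step 2 (the ratio version).} Dividing the asymptotic of Step~1 by $a$ gives
$$\frac{\ln\chi(K^{m}(a,b))}{a}\ \sim\ \frac{a+b-1}{a}\,\ln m,$$
and $\frac{a+b-1}{a}=1+\frac{b}{a}-\frac{1}{a}\to 1+r$ as $a,b\to\infty$ through coprime pairs with $\frac{b}{a}\to r$ (such pairs are plentiful). To make this precise: given $\varepsilon>0$, for all sufficiently large such $(a,b)$ one has $\abs{\frac{a+b-1}{a}-(1+r)}<\varepsilon$, and then, choosing $m$ large depending on $(a,b)$ so that by Step~1 the quantity $\ln\chi(K^{m}(a,b))$ lies within a factor $1\pm\varepsilon$ of $(a+b-1)\ln m$, one obtains $\frac{\ln\chi(K^{m}(a,b))}{a}=(1+O(\varepsilon))(1+r)\ln m$, which is the assertion $\frac{\ln\chi(K^{m}(a,b))}{a}\sim(1+r)\ln m$.

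\textbf{The main obstacle.} Analytically there is nothing hard left once Theorem~\ref{thm:eq} is available: the Corollary is just the statement that $\chi(K^{m}(a,b))$ grows polynomially in $m$ of degree exactly $a+b-1$, which is transparent from Theorem~\ref{thm:eq}. The two points to be careful about are the positivity and finiteness of the constant $\chi(Q^{1}(a,b))$ in Step~1 (without which $\ln\chi(Q^{1}(a,b))$ would not be a bounded additive correction) and, in Step~2, the order of the two limiting processes --- this is precisely why the statement is phrased with ``large enough $m$ depending on $a,b$'' rather than as a single joint limit. I would therefore spell out the positivity input explicitly, keep the quantifiers in Step~2 explicit, and treat everything else as routine.
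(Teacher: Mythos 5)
Your proposal is correct and follows essentially the same route as the paper: taking logarithms in Theorem~\ref{thm:eq} (equivalently, in the exact identity of Lemma~\ref{lem:linear}) and observing that the $m$-independent additive constant is negligible against $(a+b-1)\ln m$, with the positivity of $\chi(Q^{1}(a,b))$ covered by the paper's remark that $M(Q^{1}(\overline{a},\overline{b}))$ is a non-trivial smooth projective variety. Your Step~2, spelling out the quantifiers for the ratio version, is just a more explicit rendering of what the paper leaves implicit.
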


Let us mention that Douglas has conjectured the following
\cite{GroPan,Wei09}.  For coprime $a,b>>0$ such that $\frac{b}{a}\sim
r$ and each $m$, we have that $\frac{\ln \chi(K^{m}(a,b))}{a}$ is a
continuous function of $r$.  In \cite{Wei09}, assuming the continuity,
the quantity has been determined through the explicit formula of
$\chi(K^{m}(a,a+1))$ in \cite{Wei09} mentioned above.  So, now we
notice that some estimates on
$\frac{\ln(\frac{\chi(Q^{1}(a,b))}{a!b!})}{a+b-1}$ in terms of
$\frac{b}{a}\sim r$ for $a>>0$ would give further understanding of
$\frac{\ln\chi(K^{m}(a,b))}{a}$.

\section{Proofs}\label{sec:proofs}
Let us expand and introduce notions.  For each $a$, let $\overline{a}$
denote a partition of $a$ such that for non-negative integers $a_{l}$
of $l\geq 1$, we have $\sum_{l} l a_{l}=a$. We put
$S_{\overline{a}}=\sum a_{l}$ for our convenience.  When $a_{1}=a$, we
simply write $a$ for $\overline{a}$.  For a quiver $Q$ and
representations $E,F$ of $Q$, on the Grothendieck group of the
category of representations of $Q$, let $\langle E, F \rangle_{Q}$ be
the Euler form and $\{ E,F \}_{Q}$ be the symplectic form $\langle
F,E\rangle_{Q}-\langle E,F\rangle_{Q}$.  For a dimension vector $d$,
we call a partition $d^{1},\ldots,d^{s}$ of $d$ such that
$\sum_{p=1}^{s}d^{p}=d$ and $\{\sum_{p=1}^{b}d^{p},d \}_{Q}>0$ for
each $b=1,\ldots, s-1$ to be admissible.

For each $m>0$ and partitions $\overline{a},\overline{b}$ of $a$ and
$b$, we define the bipartite quiver $Q^{m}(\overline{a},\overline{b})$
as follows.  It consists of $S_{\overline{a}}$ source vertices such
that for each $l$, we have $a_{l}$ vertices $v$; for our convenience,
we say $a_{l}$ is the label of $v$ and we put $w(v)=l$.  It consists
of $S_{\overline{b}}$ terminal vertices with labels and $w(\cdot)$
being defined by the same manner.  We put $m w(v) w(v')$ arrows from
each source vertex $v$ to each terminal vertex $v'$.
  
Let us explain Denef's stability conditions in use.  For the
$m$-Kronecker quiver $K^{m}$, the source vertex $(1,0)$, and the
terminal vertex $(0,1)$, the slope function $\mu$ satisfies
$\mu(1,0)>\mu(0,1)$.  For $Q^{m}(\overline{a},\overline{b})$ and
vertices $v$ and $v'$ with the labels being $a_{l}$ and $b_{l'}$,
central charges $\frac{Z(v)}{w(v)}$ and $\frac{Z(v')}{w(v')}$ coincide
with those of the vertices $(1,0)$ and $(0,1)$.

We write $(\overline{a},\overline{b})$ for the dimension vector which
has one on each vertex of the quiver
$Q^{m}(\overline{a},\overline{b})$. We let
$M(Q^{m}(\overline{a},\overline{b}))$ to be the moduli space of stable
representations of the dimension vector $(\overline{a},\overline{b})$
of $Q^{m}(\overline{a},\overline{b})$.  For coprime $a,b$ and moduli
spaces of stable representations of quivers such that $M(K^{m}(a,b))$
and $M(Q^{m}(\overline{a},\overline{b}))$, we denote $P(K^{m}(a,b),y)$
and $P(Q^{m}(\overline{a},\overline{b}),y)$ to be Poincare
polynomials.  For the $m$-Kronecker quiver $K^{m}$, we have the
following MPS formula \cite[Appendix D]{ManPioSen}.

\begin{thm}\label{thm:mps}(MPS formula)
  For each coprime $a,b$ and $m>0$, we have
\begin{align*}
  P(K^{m}(a,b),y)&=
  y^{-\langle (a,b),(a,b) \rangle_{K^{m}}}
  \sum_{\overline{a},\overline{b}} y^{ \langle
    (\overline{a},\overline{b}), 
    (\overline{a},\overline{b}) 
    \rangle_{Q^{m}(\overline{a},\overline{b})}}
  P(Q^{m}(\overline{a},\overline{b}),y)\cdot \\ 
  & \ \ \  \ \  \Pi_{l} \frac{1}{\overline{a}_{l}!}
  \left(\frac{y-y^{-1}}{l(y^{l}-y^{-l})}(-1)^{l-1}\right)^{\overline{a}_{l}} \cdot \\
  &  \ \ \ \ \ \Pi_{l} \frac{1}{\overline{b}_{l}!}
  \left(\frac{y-y^{-1}}{l(y^{l}-y^{-l})}(-1)^{l-1}\right)^{\overline{b}_{l}}.
\end{align*}
\end{thm}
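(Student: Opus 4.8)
The plan is to read the MPS formula off the Hall algebra of representations of $K^{m}$, by ``abelianizing'' the two extremal simple objects and then re-running Reineke's Harder--Narasimhan recursion \cite{Rei03}. Since $a,b$ are coprime, every semistable representation of $K^{m}$ of dimension $(a,b)$ is stable, so $M(K^{m}(a,b))$ is a smooth projective variety with vanishing odd cohomology; likewise for each $M(Q^{m}(\overline{a},\overline{b}))$ under the Denef stability conditions fixed above. Thus $P(K^{m}(a,b),y)$ and $P(Q^{m}(\overline{a},\overline{b}),y)$ are genuine Poincare polynomials, and --- once the $y^{\mp\langle\cdot,\cdot\rangle}$ factors are stripped off, which only passes between a Poincare polynomial and its symmetrized ``stacky'' counterpart --- the identity is equivalent to the corresponding statement for the refined invariants obtained by counting $\bF_{q}$-points of the moduli stacks and setting $y^{2}=q$.

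I would start from the stack $\mathfrak{M}(K^{m},(a,b))$ of all representations of dimension $(a,b)$. Writing $S_{1},S_{2}$ for the source and sink simples, every representation sits in a canonical short exact sequence $0\to S_{2}^{\oplus b}\to E\to S_{1}^{\oplus a}\to 0$, and since $\Ext^{1}(S_{1},S_{2})=\bC^{m}$ while $\Ext^{1}(S_{2},S_{1})=0=\Hom(S_{1},S_{2})$, the class of $\mathfrak{M}(K^{m},(a,b))$ is built out of the pure-vertex stacks $[\mathrm{pt}/\GL_{a}]$, $[\mathrm{pt}/\GL_{b}]$ and this bilinear $\Ext^{1}$-pairing. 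Now abelianize each pure-vertex contribution: the (suitably normalized) generating series of $[\mathrm{pt}/\GL_{a}]$ expands over partitions $\overline{a}$ with exactly the coefficients $\prod_{l}\frac{1}{\overline{a}_{l}!}\left(\frac{y-y^{-1}}{l(y^{l}-y^{-l})}(-1)^{l-1}\right)^{\overline{a}_{l}}$, which is the standard $q$-exponential identity --- equivalently the Kontsevich--Soibelman \cite{KonSoi} passage between integer and rational invariants applied to the trivial input $\Omega((1,0))=1$, $\Omega((l,0))=0$ for $l>1$ --- so a part $l$ records $l$ copies of $S_{1}$ ``fused'' into one weight-$l$ source vertex and the $\frac{1}{\overline{a}_{l}!}$ is the symmetry permuting equal-size clusters; similarly at $S_{2}$. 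Because $\Ext^{1}(S_{1}^{\oplus l},S_{2}^{\oplus l'})=\bC^{m l l'}$, a weight-$l$ source cluster and a weight-$l'$ sink cluster communicate through $m l l'$ arrows, which is precisely the quiver $Q^{m}(\overline{a},\overline{b})$. Hence, in the Hall algebra, $\mathfrak{M}(K^{m},(a,b))$ decomposes as the sum over $\overline{a},\overline{b}$ of these cluster coefficients times the stack of representations of $Q^{m}(\overline{a},\overline{b})$ of dimension $(\overline{a},\overline{b})$; feeding both sides into Reineke's Harder--Narasimhan recursion and inverting it --- the Denef stability conditions match by the alignment of central charges arranged above --- extracts the stable counts and produces the stated identity, with $-\langle(a,b),(a,b)\rangle_{K^{m}}$ and $+\langle(\overline{a},\overline{b}),(\overline{a},\overline{b})\rangle_{Q^{m}(\overline{a},\overline{b})}$ the discrepancy between the two stacky normalizations (differences of $\dim R-\dim G$).

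The delicate part --- where the real content lies --- is exactly this abelianization step. The summands on the right are rational functions of $y$, not Laurent polynomials, so the argument cannot be staged on the smooth projective moduli spaces and must be run on the stacks (or via $\bF_{q}$-counts), where one has to (i) reconstruct the count of $\mathfrak{M}(K^{m},(a,b))$ from the two pure-vertex stacks and the $\Ext^{1}$-pairing with the correct quantum-torus twists, (ii) confirm that the cluster expansion yields exactly one factor $\frac{y-y^{-1}}{l(y^{l}-y^{-l})}(-1)^{l-1}$ per part $l$ and no further terms, and (iii) keep precise track of the normalizations so that they collapse into the two Euler-form prefactors. Once this is secured, identifying the residual bipartite bookkeeping with $Q^{m}(\overline{a},\overline{b})$ and obtaining the prefactors by a dimension count are routine. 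One may instead follow the physics derivation of \cite{ManPioSen}, which sidesteps (i)--(iii): write the Witten index of the $U(a)\times U(b)$ quiver quantum mechanics as a Jeffrey--Kirwan residue integral over the Cartan and evaluate it by successively ``breaking'' each unitary factor into a product of smaller ones --- each small $U(l)$ factor integrating to $\frac{y-y^{-1}}{l(y^{l}-y^{-l})}(-1)^{l-1}$, the bifundamentals reorganizing into $Q^{m}(\overline{a},\overline{b})$ --- a self-contained, if analytic, computation.
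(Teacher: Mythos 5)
First, note that the paper does not actually reprove this theorem: it is quoted from \cite[Appendix D]{ManPioSen}, and the text only records the key point of that proof, namely that when one computes $P(Q^{m}(\overline{a},\overline{b}),y)$ via Reineke's Harder--Narasimhan formula \cite[Corollary 6.8]{Rei03}, a partition $(\alpha^{p},\beta^{p})$ of $(\overline{a},\overline{b})$ is admissible exactly when the induced partition $(\sum_{l}l\alpha_{l}^{p},\sum_{l}l\beta_{l}^{p})$ of $(a,b)$ is admissible for $K^{m}$, after which the identity follows by explicitly matching the terms of the two HN sums. Your proposal takes a different route in flavor --- a Hall-algebra/stack-count argument in which the two vertex simples are ``abelianized'' and the per-part factor $\frac{y-y^{-1}}{l(y^{l}-y^{-l})}(-1)^{l-1}$ is to come from a $q$-exponential (integer-to-rational, in the sense of \cite{KonSoi}) expansion of the pure-vertex stacks --- which is a legitimate and even natural strategy, closer in spirit to later mathematical treatments of the MPS degeneration formula than to the residue computation of \cite{ManPioSen}.

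However, as written the proposal has a genuine gap: the steps you yourself flag as (i)--(iii) --- reconstructing the stacky count of $\mathfrak{M}(K^{m},(a,b))$ from the two vertex stacks with the correct quantum-torus twists, verifying that the cluster expansion produces exactly one factor $\frac{y-y^{-1}}{l(y^{l}-y^{-l})}(-1)^{l-1}$ per part and nothing else, and tracking the normalizations down to the two Euler-form prefactors --- are precisely the content of the theorem, and they are asserted rather than carried out. The same is true of the final inversion step: ``feeding both sides into Reineke's recursion and inverting it'' silently requires that admissible (HN) decompositions for $Q^{m}(\overline{a},\overline{b})$ under the Denef stability correspond term by term to those for $K^{m}$, which is exactly the nontrivial matching the paper isolates as the key point of \cite{ManPioSen}; your sketch does not establish it. So while the architecture is plausible, what you have is an outline that defers the substantive identities, not a proof; to complete it you would need to prove the $q$-exponential coefficient identity for $[\mathrm{pt}/\GL_{a}]$, verify the bipartite reorganization of the $\Ext^{1}$-pairing into $Q^{m}(\overline{a},\overline{b})$ including the $m\,w(v)w(v')$ arrow multiplicities, and justify the compatibility of the two HN recursions under the stated alignment of central charges.
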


We shall not repeat their proof of MPS formula, but we would like to
mention a key point of the proof as follows.  To compute
$P(Q^{m}(\overline{a},\overline{b}),y)$ with Reineke's formula
\cite[Corollary 6.8]{Rei03}, we start with a partition
$(\alpha^{p},\beta^{p})$ of $(\overline{a},\overline{b})$ for
$p=1,\ldots,s$ of some $s$.  For each $p$ and $l$, we put
$\alpha_{l}^{p}$ to denote the number of non-zero entries of
$\alpha^{p}$ and of vertices of labels being $a_{l}$; we put
$\beta_{l}^{p}$ of $\beta^{p}$ by the same manner.  Observe that,
through direct computation on symplectic forms, the partition
$(\alpha^{p},\beta^{p})$ is admissible if and only if the partition
$(\sum_{l}l \alpha_{l}^{p}, \sum_{l} l \beta_{l}^{p})$ of $(a,b)$ for
$p=1,\ldots,s$ is admissible.  Now, as shown in \cite[Appendix
D]{ManPioSen}, we can proceed by explicitly computing involved terms
for admissible partitions.

For Euler characteristics, we put the following for our convenience.
\begin{cor}\label{cor:mps}
We have
$$\chi(K^{m}(a,b))=
\sum_{\overline{a},\overline{b}} \chi(Q^{m}(\overline{a},\overline{b}))
\cdot \Pi_{l} \frac{1}{\overline{a}_{l}!} \frac{(-1)^{\overline{a}_{l}(l-1)}}{l^{2 \overline{a}_{l}}}
\cdot \Pi_{l} \frac{1}{\overline{b}_{l}!} \frac{(-1)^{\overline{b}_{l}(l-1)}}{l^{2 \overline{b}_{l}}}.$$
\end{cor}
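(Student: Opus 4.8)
The plan is to derive the identity by specializing the MPS formula of Theorem~\ref{thm:mps} at $y=1$. First I would record that, for the coprime dimension vector $(a,b)$ and for the (automatically primitive) dimension vector $(\overline{a},\overline{b})$, the moduli spaces $M(K^{m}(a,b))$ and $M(Q^{m}(\overline{a},\overline{b}))$ are smooth projective and carry cohomology concentrated in even degrees, since they admit affine pavings (cf.\ \cite{Rei03}); consequently the Poincar\'e polynomials specialize to the Euler characteristics, i.e.\ $P(K^{m}(a,b),1)=\chi(K^{m}(a,b))$ and $P(Q^{m}(\overline{a},\overline{b}),1)=\chi(Q^{m}(\overline{a},\overline{b}))$.

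Next I would pass to the limit $y\to 1$ on both sides of the formula in Theorem~\ref{thm:mps}. Since the sum over the partitions $\overline{a},\overline{b}$ of $a,b$ is finite, the limit may be taken term by term. The two prefactors $y^{-\langle (a,b),(a,b)\rangle_{K^{m}}}$ and $y^{\langle(\overline{a},\overline{b}),(\overline{a},\overline{b})\rangle_{Q^{m}(\overline{a},\overline{b})}}$ both tend to $1$. The only non-trivial limit is that of the ``quantum integer'' factor, for which writing $y=e^{t}$ gives
$$\lim_{y\to 1}\frac{y-y^{-1}}{l(y^{l}-y^{-l})}=\lim_{t\to 0}\frac{2\sinh t}{l\cdot 2\sinh(lt)}=\frac{1}{l^{2}};$$
hence each bracketed factor $\left(\frac{y-y^{-1}}{l(y^{l}-y^{-l})}(-1)^{l-1}\right)^{\overline{a}_{l}}$ converges to $\left(\frac{(-1)^{l-1}}{l^{2}}\right)^{\overline{a}_{l}}=\frac{(-1)^{\overline{a}_{l}(l-1)}}{l^{2\overline{a}_{l}}}$, and similarly for the factors indexed by $\overline{b}_{l}$.

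Finally I would substitute these limits into the right-hand side of Theorem~\ref{thm:mps} and invoke the identification $P(\cdot,1)=\chi(\cdot)$ from the first step, which produces exactly the asserted formula. There is no substantial obstacle here beyond bookkeeping; the one point that genuinely deserves a word of justification is that the Poincar\'e polynomials specialize to Euler characteristics at $y=1$ — equivalently, the vanishing of odd cohomology of these quiver moduli spaces — and this is standard.
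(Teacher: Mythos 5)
Your proposal is correct and is essentially the paper's own (implicit) argument: the paper states Corollary~\ref{cor:mps} as a direct specialization of Theorem~\ref{thm:mps}, which is exactly your $y\to 1$ limit with $\lim_{y\to 1}\frac{y-y^{-1}}{l(y^{l}-y^{-l})}=\frac{1}{l^{2}}$ and the identification $P(\cdot,1)=\chi(\cdot)$ via vanishing of odd cohomology. Nothing further is needed.
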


Notice that $M(Q^{1}(\overline{a},\overline{b}))$ is a non-trivial
smooth projective variety, since we have stable representations
including ones with invertible maps on every arrows.  Now, we have 
the following.

\begin{lemma}\label{lem:linear}
  We have
  $$\chi(Q^{m}(\overline{a},\overline{b}))=m^{S_{\overline{a}}+S_{\overline{b}}-1}
  \chi(Q^{1}(\overline{a},\overline{b})).$$
\end{lemma}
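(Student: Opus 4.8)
The plan is to compute $\chi(Q^{m}(\overline{a},\overline{b}))$ by Reineke's resolution-of-the-singularity / localization formula expressing the Euler characteristic of a moduli space of stable quiver representations as an alternating sum over admissible partitions, indexed by the Harder--Narasimhan type. As recalled in the excerpt just after the statement of the MPS formula, for $Q^{m}(\overline{a},\overline{b})$ the admissible partitions $(\alpha^{p},\beta^{p})_{p=1}^{s}$ of the dimension vector $(\overline{a},\overline{b})$ are governed by the vertex-labels $a_{l},b_{l}$ in exactly the same way as for $Q^{1}(\overline{a},\overline{b})$: a partition is admissible for $Q^{m}$ if and only if it is admissible for $Q^{1}$, since the sign of the relevant symplectic pairings $\{\,\cdot,\cdot\,\}_{Q^{m}}$ is independent of $m$ (each arrow count $m\,w(v)w(v')$ scales uniformly by $m$, so the pairing is $m$ times the $Q^{1}$ pairing and has the same sign). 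Thus the \emph{index set} of the Reineke sum is identical for all $m$, and only the summands depend on $m$.

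Next I would isolate that $m$-dependence. Each term in Reineke's formula for $\chi$ is, up to a sign coming from the number of parts, a product of binomial-type factors built from the Euler form $\langle\,\cdot,\cdot\,\rangle_{Q^{m}}$ evaluated on the pieces of the partition and on $(\overline{a},\overline{b})$; concretely $\langle E,F\rangle_{Q^{m}} = \sum_{\text{vertices}} (\dim E_{v})(\dim F_{v}) - m\sum_{\text{arrows } v\to v'} w(v)w(v')(\dim E_{v})(\dim F_{v'})$. Since every dimension vector in sight has entries $0$ or $1$ on the bipartite quiver, the "arrow" contribution is $m$ times a combinatorial quantity depending only on $\overline{a},\overline{b}$ and the partition, and the "vertex" contribution is $m$-independent. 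I would track the total exponent of $m$ that results after assembling the full alternating sum; the claim $\chi(Q^{m}) = m^{S_{\overline{a}}+S_{\overline{b}}-1}\chi(Q^{1})$ asserts that \emph{every} surviving term scales by the same power $m^{S_{\overline{a}}+S_{\overline{b}}-1}$, one less than the total number of vertices. The natural way to see this uniformity is an Euler-characteristic-of-a-fibration or affine-bundle argument: the moduli stack (before stability) is a quotient of an affine space of arrow-maps of dimension $m\sum w(v)w(v')$ by $\prod_{v}\mathbb{G}_{m}$, and rescaling arrows shows $M(Q^{m}(\overline{a},\overline{b}))$ admits a stratification (or an iterated affine-/projective-bundle structure over $M(Q^{1}(\overline{a},\overline{b}))$) with each stratum/fiber contributing a factor of a power of $m$; since Euler characteristic is multiplicative on such bundles and additive on strata, the factor comes out globally. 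Alternatively, one can argue purely formally: substitute the above expansion of $\langle\,\cdot,\cdot\,\rangle_{Q^{m}}$ into Reineke's formula, observe that each admissible partition of type $(\alpha^{p},\beta^{p})$ into $s$ parts contributes a power of $m$ equal to (a fixed combinatorial expression), and check by induction on $s$ (telescoping the recursion defining Reineke's alternating sum) that this expression always equals $S_{\overline{a}}+S_{\overline{b}}-1$.

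I would organize the write-up as: (1) record that the admissible-partition index set is $m$-independent (cited from the discussion following the MPS formula); (2) make precise how $\langle\,\cdot,\cdot\,\rangle_{Q^{m}}$ depends linearly on $m$ through the arrow term; (3) prove the uniform exponent $S_{\overline{a}}+S_{\overline{b}}-1$, either via the bundle/stratification argument on $M(Q^{m})$ over $M(Q^{1})$ or via the inductive telescoping of Reineke's formula; (4) conclude. The main obstacle is step (3): controlling the exponent of $m$ uniformly across all admissible partitions of all lengths $s$. Naively, longer partitions seem to produce larger powers of $m$ (more arrow-factors), and it is not a priori obvious that the cancellations in Reineke's alternating sum conspire to leave exactly $m^{S_{\overline{a}}+S_{\overline{b}}-1}$ in every term rather than a genuine polynomial in $m$ of higher degree; making this cancellation transparent — most cleanly by exhibiting the geometric bundle structure that forces it — is the crux of the argument.
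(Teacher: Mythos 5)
You correctly identify steps (1) and (2) — the $m$-independence of the set of admissible partitions, because $\{\cdot,\cdot\}_{Q^{m}}=m\{\cdot,\cdot\}_{Q^{1}}$, and the linear-in-$m$ arrow contribution to the Euler form — and these match the paper. But step (3), which you yourself flag as the unresolved crux, is exactly where the proof lives, and neither route you sketch for it works as stated. The term-by-term route rests on a misreading of how $m$ enters Reineke's formula: the $m$-dependence sits in the \emph{exponents} of $q=y^{2}$ (each admissible partition's exponent gets multiplied by $m$), not as a multiplicative power of $m$ attached to each summand, so there is no ``uniform exponent $S_{\overline{a}}+S_{\overline{b}}-1$ in every term'' to verify and no induction on $s$ to run; the power of $m$ is not visible before specializing $q$. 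The geometric route presupposes a fibration or stratified bundle structure of $M(Q^{m}(\overline{a},\overline{b}))$ over $M(Q^{1}(\overline{a},\overline{b}))$, which you do not construct and which is not available in any evident generality: there is no natural morphism between these moduli spaces (for one source and one sink one has $\mathbb{P}^{m-1}$ over a point, but for a general bipartite quiver no such bundle is exhibited), so Euler-characteristic multiplicativity cannot be invoked.

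The idea you are missing is to stay with the Poincar\'e polynomial and extract $m^{S_{\overline{a}}+S_{\overline{b}}-1}$ from a $q\to 1$ limit rather than from term-by-term bookkeeping. Reineke's formula writes $P(Q^{m}(\overline{a},\overline{b}),q)$ as the $m$-independent prefactor $(q-1)^{1-S_{\overline{a}}-S_{\overline{b}}}$ times a signed sum over admissible partitions of powers of $q$; by your own steps (1)--(2), the numerator for $Q^{m}$ is the numerator for $Q^{1}$ with $q$ replaced by $q^{m}$. Divisibility of the $Q^{1}$ numerator by $(q-1)^{S_{\overline{a}}+S_{\overline{b}}-1}$ is automatic because $P(Q^{1}(\overline{a},\overline{b}),q)$ is a genuine (non-zero, non-negative-coefficient) polynomial, so the numerator can be written as $\sum_{i}\alpha_{i}(q-1)^{S_{\overline{a}}+S_{\overline{b}}-1}q^{\beta_{i}}$ and the $Q^{m}$ numerator as $\sum_{i}\alpha_{i}(q^{m}-1)^{S_{\overline{a}}+S_{\overline{b}}-1}q^{m\beta_{i}}$. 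Letting $q\to 1$ and using $\lim_{q\to 1}\frac{q^{m}-1}{q-1}=m$ against the fixed prefactor gives $\chi(Q^{m}(\overline{a},\overline{b}))=m^{S_{\overline{a}}+S_{\overline{b}}-1}\sum_{i}\alpha_{i}=m^{S_{\overline{a}}+S_{\overline{b}}-1}\chi(Q^{1}(\overline{a},\overline{b}))$; the cancellation you worried about is thus handled once and for all by polynomiality, with no analysis across partitions needed.
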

\begin{proof}
  Let us consider the Poincare polynomial
  $P(Q^{m}(\overline{a},\overline{b}),y)$ with Reineke's formula
  \cite[Corollary 6.8]{Rei03}.  For the dimension vector
  $(\overline{a},\overline{b})$, we take an admissible partition
  $d^{1},\ldots, d^{s}$ and the term $(-1)^{s-1}y^{2\sum_{k\leq l}
    \sum_{v\to v'}d_{v}^{l}d_{v'}^{k}}$.

  We notice that $\{ \cdot, \cdot
  \}_{Q^{m}(\overline{a},\overline{b})}=m \{ \cdot, \cdot
  \}_{Q^{1}(\overline{a},\overline{b})}$.  The set of admissible
  partitions is invariant under choices of $m$. For each admissible
  partition, the power of $y$ above is the $m$ times of that for
  $P(Q^{1}(\overline{a},\overline{b}),y)$.

  We have that $P(Q^{1}(\overline{a},\overline{b}),y)$ is a non-zero
  polynomial.  Ignoring an overall factor of a power of ¡¡$y$ and
  writing $y^{2}$ as $q$ for simplicity, for some non-trivial and
  non-negative integers $\alpha_{i}$ and $\beta_{i}$, we have
  \begin{align*}
  P(Q^{1}(\overline{a},\overline{b}),q)
  &=(q-1)^{1-S_{\overline{a}}-S_{\overline{b}}} \left( \sum_{i\geq 0} \alpha_i
  (q-1)^{S_{\overline{a}}+S_{\overline{b}}-1} q^{\beta_i}\right).
  \end{align*}
  For admissible partitions, the second factor is the sum of terms
  above.  So we have
  \begin{align*}
    P(Q^{m}(\overline{a},\overline{b}),q)
    &=(q-1)^{1-S_{\overline{a}}-S_{\overline{b}}} \left( \sum_{i \geq 0} \alpha_i
      (q^{m}-1)^{S_{\overline{a}}+S_{\overline{b}}-1} q^{m \beta_i}\right), 
 \end{align*}
 and the assertion follows.
\end{proof}

We put a proof of Theorem \ref{thm:eq}.
\begin{proof}
  By Lemma \ref{lem:linear}, we see that the term $\chi(Q^{m}(a,b))$
  carries the highest power of $m$ among
  $\chi(Q^{m}(\overline{a},\overline{b}))$ in Corollary \ref{cor:mps}.
\end{proof}
      
We put a proof of Corollary \ref{cor:douglas}.
\begin{proof}\label{proof:douglas}
  We have
  \begin{align*}
    \ln\left(\frac{\chi(Q^{m}(a,b))}{a!b!}\right)
    &=\ln\left(\frac{m^{a+b-1}\chi(Q^{1}(a,b))}{a!b!}\right)\\
    &=(a+b-1)\ln (m)
    +\ln\left(\frac{\chi(Q^{1}(a,b))}{a!b!}\right).
  \end{align*}
  So for $a+b\neq 1$ and large enough $m$ so that
  \begin{align*}
    \left|\frac{\ln(\frac{\chi(Q^{1}(a,b))}{a!b!})}{(a+b-1)\ln(m)}\right|&<<1,\\
  \end{align*}
  the assertion follows.
\end{proof}
      
Let us compute $\chi(Q^{1}(a,a+1))$ as in the introduction.  
From \cite{Wei09}, we recall the following.
\begin{thm}(\cite[Theorem 6.6]{Wei09})
  $$\chi(K^{m}(a,a+1))
  =\frac{m}{(a+1)((m-1)a+m)} \binom{(m-1)^{2}a+(m-1)m}{a}.$$
\end{thm}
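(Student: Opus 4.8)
The plan is to compute $\chi(K^m(a,a+1))$ by torus localization, following Weist \cite{Wei09}. Since $(a,a+1)$ is coprime, $M(K^m(a,a+1))$ is a smooth projective variety, and the torus $T=(\bC^*)^m$ acts on it by rescaling the $m$ arrows; this commutes with the $\GL_a\times\GL_{a+1}$-action defining the quotient and preserves stability, so $\chi(K^m(a,a+1))=\chi\big(M(K^m(a,a+1))^T\big)$. First I would identify the fixed locus: diagonalising the $T$-action on the two vector spaces of a $T$-fixed stable representation turns it into a stable representation of the universal abelian covering quiver $\widehat{K}^m$ of $K^m$ --- the bipartite quiver with source and sink vertices indexed by $\bZ^{m-1}$, the $m$ arrows out of a source $v$ landing at the sinks $v, v+e_1,\dots,v+e_{m-1}$ --- for some lifted dimension vector $\widehat{d}$ with push-forward $(a,a+1)$ and the induced stability. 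Thus $M(K^m(a,a+1))^T$ is a finite disjoint union, indexed by deck-group orbits $[\widehat{d}]$, of covering-quiver moduli spaces, and $\chi(K^m(a,a+1))=\sum_{[\widehat{d}]}\chi\big(M_{\widehat{\theta}}(\widehat{K}^m,\widehat{d})\big)$.

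The next step is to make this sum combinatorial. For the dimension vector $(a,a+1)$ one shows that the $\widehat{d}$ supporting stable representations are (essentially) thin and supported on connected subquivers $\Gamma\subset\widehat{K}^m$ with $a$ sources and $a+1$ sinks, with $M_{\widehat\theta}(\widehat K^m,\widehat d)$ the projective toric variety attached to $\Gamma$ --- a point, contributing $1$, in the tree case. King's stability inequality translates here into an explicit balance condition: for every proper sub-subquiver containing a source, its number of sinks is constrained against the ambient slope coming from the central charge normalisation described above. Carrying this out reduces $\chi(K^m(a,a+1))$ to a weighted enumeration of such stable subquivers of $\widehat{K}^m$ up to the translation action of $\bZ^{m-1}$.

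I would then evaluate this enumeration. Rooting a configuration at a distinguished sink and recording, at each source, which arrow-types occur sets up a bijection between rooted stable configurations and a family of plane trees whose branching is governed by $m$ and whose shape is cut out by the balance condition; the generating function $F(x)=\sum_n\big(\#\,\text{rooted configurations with $n$ sources}\big)x^n$ then satisfies an algebraic functional equation of Fuss--Catalan type, and Lagrange inversion gives $[x^a]F=\tfrac{m(m-1)}{(m-1)^2a+m(m-1)}\binom{(m-1)^2a+m(m-1)}{a}$. Dividing by the $a+1$ choices of root recovers $\tfrac{m}{(a+1)((m-1)a+m)}\binom{(m-1)^2a+(m-1)m}{a}$. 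As a sanity check, for $m=2$ the quiver $\widehat{K}^2$ is the bi-infinite path, there is a unique stable configuration, and the formula collapses to $1$, consistently with all coprime $K^2$-moduli being points.

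The main obstacle is the middle step: pinning down exactly which lifted dimension vectors contribute and the precise combinatorial description of the stable thin subquivers --- in particular checking that cyclic subquivers either fail stability or contribute exactly what the Fuss--Catalan count predicts --- and making the root-removal and deck-quotient bookkeeping exact. The localization input of the first paragraph is by now standard (Engel--Reineke, Weist), and once the functional equation is isolated the Lagrange inversion is routine. An alternative that avoids the fixed-point analysis is to run Reineke's Harder--Narasimhan recursion for $P(K^m(a,b),y)$ \cite{Rei03}, specialise $y\to1$, and show that the generating series of the sub-diagonal $\big(\chi(K^m(a,a+1))\big)_{a\ge0}$ satisfies the same algebraic equation; this trades geometry for a more intricate manipulation of the recursion, so I would still prefer the localization proof.
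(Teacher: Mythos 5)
Note first that the paper does not prove this statement at all: it is quoted verbatim from Weist, and the only ``proof'' in the paper is the citation \cite[Theorem 6.6]{Wei09}. Your outline is essentially a reconstruction of Weist's own route (torus localization to the universal abelian covering quiver, reduction to a tree count, Fuss--Catalan generating function), so you are not in conflict with the paper --- but as a standalone proof your write-up has a genuine gap, and it sits exactly where you flag it. The first paragraph (fixed points of the $(\bC^*)^m$-action are stable representations of the covering quiver with lifted dimension vectors pushing forward to $(a,a+1)$) is indeed standard. What is not standard, and is the entire mathematical content of Weist's theorem, is the middle step: the fixed components are moduli spaces for \emph{arbitrary} compatible lifted dimension vectors, and thinness is not automatic; non-thin components and components supported on subquivers with cycles are in general positive-dimensional, so ``a point, contributing $1$, in the tree case'' does not dispose of them. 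One must either iterate the localization down to the universal covering tree or argue directly that the surviving contributions are exactly enumerated by the stable thin tree-shaped configurations, with the precise form of King's inequality (your ``balance condition'') worked out for the slope induced from $(a,a+1)$. Until that classification is in hand, the algebraic functional equation for $F(x)$ is asserted rather than derived; the Lagrange inversion, the division by the $a+1$ root choices versus the $\bZ^{m-1}$ deck action, and the $m=2$ check are all arithmetic confirmations of the target formula, not substitutes for the missing combinatorial core.

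So the honest status of your proposal is: correct strategy, same as the cited source, but the decisive step (which lifted dimension vectors contribute, why only the tree/thin count survives, and the exact stability condition that cuts out the configurations being counted) is left as an acknowledged obstacle rather than proved. If you want a genuinely independent verification instead, your alternative suggestion --- running Reineke's Harder--Narasimhan recursion \cite{Rei03} for $P(K^{m}(a,a+1),y)$, specializing $y\to 1$, and showing the resulting sequence satisfies the same algebraic equation --- would do it, but that computation is also nontrivial and is likewise not carried out here.
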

        
So, by Theorem \ref{thm:eq}, we have the following.
\begin{cor}\label{cor:d+1}
  $$\chi(Q^{1}(a,a+1))
  =\lim_{m\to \infty}\frac{\chi(K^{m}(a,a+1)) a! (a+1)!}{m^{2a}}
  =(a+1)!(a+1)^{-2+a}.$$
\end{cor}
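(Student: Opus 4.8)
The plan is to assemble the statement from two ingredients already available in the excerpt: Theorem \ref{thm:eq} for the first equality, and Weist's Theorem 6.6 for the evaluation of the limit. For the first equality I would simply specialize Theorem \ref{thm:eq} to $b=a+1$, so that $a+b-1=2a$ and $a!\,b!=a!\,(a+1)!$. Since the left-hand side $\chi(Q^{1}(a,a+1))$ does not depend on $m$, the relation $\chi(Q^{1}(a,a+1))\sim \frac{a!\,(a+1)!}{m^{2a}}\chi(K^{m}(a,a+1))$ is exactly the assertion that the displayed limit exists and equals $\chi(Q^{1}(a,a+1))$. (Equivalently, one reads off from Corollary \ref{cor:mps} together with Lemma \ref{lem:linear} that $\chi(K^{m}(a,a+1))$ is a polynomial in $m$ of degree $2a$ whose leading coefficient is $\chi(Q^{1}(a,a+1))/(a!\,(a+1)!)$, which is the same fact.)

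For the second equality I would substitute Weist's formula and extract the leading term in $m$. Set $D:=(m-1)a+m=(a+1)m-a$ and $N:=(m-1)^{2}a+(m-1)m=(m-1)D$, so that the formula reads $\chi(K^{m}(a,a+1))=\frac{m}{(a+1)D}\binom{N}{a}$. Then $\binom{N}{a}=\frac{1}{a!}\prod_{j=0}^{a-1}(N-j)$ is a polynomial in $m$ of degree $2a$ with leading term $N^{a}/a!$, and $N=(m-1)\bigl((a+1)m-a\bigr)$ has leading term $(a+1)m^{2}$, so $\binom{N}{a}\sim (a+1)^{a}m^{2a}/a!$. The rational prefactor $\frac{m}{(a+1)D}=\frac{m}{(a+1)\bigl((a+1)m-a\bigr)}$ is $O(1)$ and tends to $\frac{1}{(a+1)^{2}}$ as $m\to\infty$. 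Multiplying, $\chi(K^{m}(a,a+1))\sim \frac{(a+1)^{a-2}}{a!}\,m^{2a}$, hence $\frac{\chi(K^{m}(a,a+1))\,a!\,(a+1)!}{m^{2a}}\to (a+1)!\,(a+1)^{a-2}=(a+1)!\,(a+1)^{-2+a}$, which is the claimed value.

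I do not expect any real obstacle here; the argument is essentially a substitution followed by bookkeeping. The only two points worth a moment of care are: (i) that the "$\sim$" in Theorem \ref{thm:eq} is an honest statement about the limit, which is legitimate precisely because its left-hand side is free of $m$; and (ii) matching powers of $m$ correctly — the degree-two polynomial $N$ inside the binomial contributes $m^{2a}$, the rational prefactor contributes $m^{0}$, and these cancel exactly against the $m^{2a}$ in the denominator, so that in the limit only the leading coefficients $(a+1)^{a}/a!$ (from the binomial) and $1/(a+1)^{2}$ (from the prefactor) survive.
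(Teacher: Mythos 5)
Your proposal is correct and follows essentially the same route as the paper: Corollary \ref{cor:d+1} is obtained there exactly by specializing Theorem \ref{thm:eq} (whose proof rests on Corollary \ref{cor:mps} and Lemma \ref{lem:linear}, as you note) to $b=a+1$ and evaluating the $m\to\infty$ limit of Weist's formula. Your leading-order bookkeeping with $D=(a+1)m-a$ and $N=(m-1)D$ is the intended computation and yields the stated value $(a+1)!(a+1)^{-2+a}$.
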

 
\begin{rmk}
  With the formula of $\chi(K^{m}(2,2a+1))$ in \cite{Rei03}, Manschot
  told the author that he has proved the following formula.
  $$\chi(Q^{1}(2,2a+1))=\frac{(2a+1)!}{a!^2}.$$
  This sequence and the one in Corollary \ref{cor:d+1} coincide with
  A002457 and A066319 at oeis.org.
\end{rmk}

\section*{Acknowledgments}
The author thanks Research Institute for Mathematical Sciences and
Institut des Hautes \'Etudes Scientifiques for providing him with
excellent research environment. He thanks Professors Dimofte, Feigin,
Fukaya, Galkin, Kajigaya, I. Kimura, Kontsevich, Manschot, Matsuki,
Nakajima, Pioline, Y. Sano, A. Sasaki, Stoppa, Tachikawa, Weinberger,
Weist, and Zagier for their comments, discussions, or lectures
related to the paper. In particular, he thanks Professor Weist 
for his discussion.  He thanks Professor Hosomichi at Yukawa 
Institute of Theoretical Physics for organizing a seminar by Professor 
Manschot in January 2011, when he first heard about MPS formula, and 
organizers of the summer school and the conference dedicated to the 
60th birthday of Professor Mori on minimal models and extremal rays 
for exciting talks related to the paper.

\end{document}